\title{\bf{Algorithms for group actions in arbitrary characteristic and a problem in singularity theory}}
\author{
       \bf{ Gert-Martin Greuel and Thuy Huong Pham}\\
 }
\date{\today}
\DeclareMathOperator{\characteristic}{char}
\newtheorem{Definition}{ Definition}[section]
\newtheorem{Theorem}[Definition]{Theorem}
\newtheorem{Remark}[Definition]{Remark}
\newtheorem{Corollary}[Definition]{Corollary}
\newtheorem{Example}[Definition]{Example}
\newcommand{\R}{\mathbb{R}}
\newcommand{\C}{\mathbb{C}}
\begin{document}
\maketitle

\begin{abstract}
We consider the actions of different groups $G$ on the space ${M}_{m,n}$ of  $m\times n$ matrices with entries in the formal power series ring $K[[x_1,\ldots, x_s]], K$ an arbitrary field. $G$ acts on  $M_{m,n}$ by analytic change of coordinates, combined with the multiplication by invertible matrices from the left, the right or from both sides, respectively. This includes right and contact equivalence of functions  and mappings, resp. ideals.
$A$ is called finitely $G$--determined if any matrix $B$, with entries of $A-B$ in $\langle x_1,...,x_s\rangle ^k$ for some $k$, is contained in the $G$--orbit of $A$. The purpose of this paper is to present algorithms for checking finite determinacy, to compute determinacy bounds and to compute the image $\widetilde T_A(GA)$ of the tangent map to the orbit map $G \to GA$. The tangent image is contained in the tangent space $T_A(GA)$ of the orbit $GA$ and we apply the algorithms to prove that both spaces may be different if the field $K$ has positive characteristic, even for contact equivalence of functions. This
 fact had been overlooked by several authors before. Besides this application,  the algorithms of this paper may be of interest for the classification of singularities in arbitrary characteristic.

\end{abstract}

\section{Introduction}\label{introduction}
Throughout this paper let $K$ be a fixed field of arbitrary characteristic and 
\[R:=K[[{\bf{x}}]]=K[[x_1,\ldots, x_s]]\] 
the formal power series ring over $K$ in $s$ variables with maximal ideal $\mathfrak{m} = \langle x_1, \ldots , x_s \rangle$. We denote by 
\[ {M}_{m,n}:=Mat(m,n, R)\]
the set of all $m\times n$ matrices with entries in $R$. 
Let $G$ denote one of the groups $\mathcal{R},\mathcal{G}_{l},\mathcal{G}_{r},\mathcal{G}_{lr}$ (defined in section \ref {section 2}), acting on  $M_{m,n}$ by analytic change of coordinates and multiplication with invertible matrices from the left, the right or from both sides, respectively.
Two matrices $A, B \in   M_{m,n}$ are called  {\it $G$--equivalent},  denoted  $A\mathop\sim\limits^{G} B$,  if $B$ lies in the orbit of A. $A$ is said to be {\it $G$ $k$--determined} if for each matrix $B\in    M_{m,n}$ with $B-A\in \mathfrak{m}^{k+1}\cdot   M_{m,n}$, we have $B\mathop\sim\limits^{G} A$, i.e. if $A$ is $G$-equivalent to every matrix which coincides with $A$ up to and including terms of order $k$. $A$ is called {\it finitely $G$--determined} if there exists a positive integer $k$ such that it is $G$ $k$--determined.

In this paper we present algorithms for checking finite determinacy and to compute determinacy bounds. Moreover, if $A$ is finitely determined, we give algorithms to compute the image $\widetilde T_A(GA)$ of the tangent map to the orbit map $G \to GA$, which is contained in the tangent space $T_A(GA)$ of the orbit $GA$. It was discovered only recently by the authors, and announced in \cite {GP16}, that both spaces may be different if the field $K$ has positive characteristic. One of the purposes of this paper is to give a proof of this result. For this we use the above mentioned algorithms and an algorithm to compute the codimension of $T_A(GA)$ in $M_{m,n}$. Since we do not know how to compute  $T_A(GA)$ directly in positive characteristic, we present algorithms to compute the 
orbit $GA$ and the stabilizer  $G_A$.

\section{Tangent spaces and tangent images}\label{section 2}

We review theoretical results from \cite {GP16} on tangent images and tangent spaces of the action of the group $G\in\{\mathcal{R},\mathcal{G}_{l},\mathcal{G}_{r},\mathcal{G}_{lr}\}$, with
\begin{align*}
&{\mathcal{R}}:= Aut(R),\\
&{\mathcal{G}}_l:=GL(m,R)\rtimes{\mathcal{R}},\\
&{\mathcal{G}}_r:=GL(n,R)\rtimes{\mathcal{R}}, \\
&\mathcal{G}_{lr}:=\left(GL(m,R)\times GL(n,R)\right)\rtimes \mathcal{R}.
\end{align*}
where $Aut(R)$ is the  group of $K$-algebra automorphisms of $R$.
These groups act on the space $  M_{m,n}$ as follows
\begin{align*} 
&(\phi, A)\mapsto \phi (A):=[\phi(a_{ij}({\bf{x}}))]=[a_{ij}(\phi ({\bf{x}}))],\\
&(U,\phi, A)\mapsto U\cdot\phi (A,)\\ 
&(V,\phi, A)\mapsto \phi (A)\cdot V, \\
&(U,V,\phi, A)\mapsto U\cdot\phi (A)\cdot V, 
\end{align*}
where ${\bf{x}}=(x_1, x_2,\ldots, x_s)$, $A=[a_{ij}({\bf{x}})]\in   M_{m,n}$, $U\in GL(m,R)$, $V\in GL(n,R)$, and $\phi ({\bf{x}}):=(\phi_1,\ldots,\phi_s)$ with $\phi_i:=\phi(x_i)\in\mathfrak{m}$ for all $i=1,\ldots,s$. 
\vskip 7pt

For $a\in  R$ and $k\in \mathbb{N}$, we by 
$jet_k(a)$ the image of $a$ in $  R/\mathfrak{m}^{k+1}$, which we identify with the power series of $a$ up to and including order $k$. 
$jet_k(A) = [jet_k(a_{ij})]$ denotes the {\textit{ k-jet}} of $A$, and
$$  M_{m,n}^{(k)}:=  M_{m,n}/\mathfrak{m}^{k+1}\cdot   M_{m,n},$$ 
the space of all $k$-jets. The $k$-jet of $G$ is
$$G^{(k)}:=\{jet_k(g)\mid g\in G\},$$
where, for example, for $g=(U,V,\phi)\in\mathcal{G}_{lr}$ we have $jet_k(g)=\left(jet_k(U), jet_k(V), jet_k(\phi)\right)$, with
$jet_k(\phi)(x_i)=jet_k(\phi(x_i))$ for all $i=1,\ldots,s$ . Then $G^{(k)}$ is an affine algebraic group, acting algebraically on the affine space $  M_{m,n}^{(k)}$ via
\begin{align*}
 &G^{(k)}\times   M_{m,n}^{(k)}\to   M_{m,n}^{(k)}, \hskip4pt
\left(jet_k(g), jet_k(A)\right)\mapsto jet_k(gA),
\end{align*}
i.e. we let representatives act and then take the $k$-jets.
\vskip 7pt

 In \cite{GP16} we  defined for $A\in \mathfrak{m}\cdot   M_{m,n}$ the following submodules of $  M_{m,n}$,
 \begin{align*}
 &\widetilde {T}_A(\mathcal{R}A):= \mathfrak{m}\cdot\left\langle\frac{\partial A}{\partial x_\nu}\right\rangle,\\
 &\widetilde T_A(\mathcal{G}_lA):= \langle E_{m, pq}\cdot A\rangle +\mathfrak{m}\cdot\left\langle\frac{\partial A}{\partial x_\nu}\right\rangle,\\
 &\widetilde T_A(\mathcal{G}_rA):=\left \langle A\cdot E_{n, hl}\right\rangle + \mathfrak{m}\cdot\left\langle\frac{\partial A}{\partial x_\nu}\right\rangle, \\
 &\widetilde T_A(\mathcal{G}_{lr}A):=\langle E_{m, pq}\cdot A\rangle +\langle A\cdot E_{n, hl}\rangle+ \mathfrak{m}\cdot\left\langle\frac{\partial A}{\partial x_\nu}\right\rangle,
 \end{align*}
the {\bf\textit{tangent images}} at $A$ to the orbit of $A$ under the actions of $\mathcal{R}$, $\mathcal{G}_{l}$, $\mathcal{G}_{r}$, and $\mathcal{G}_{lr}$ on $  M_{m,n}$, respectively. Here $\langle E_{m, pq}\cdot A\rangle$ is the $R$-submodule generated by  $E_{m, pq}\cdot A$, $p,q=1,\ldots,m$, with $E_{m,pq}$ the $(p,q)$-th canonical matrix of $Mat(m,m,R)$ (1 at place $(p,q)$ and 0 else) and $\left\langle\frac{\partial A}{\partial x_\nu}\right\rangle$ is the $R$-submodule generated by the matrices $\frac{\partial A}{\partial x_\nu} = \left[\frac{\partial a_{ij}}{\partial x_\nu} ({\bf x})\right], \nu = 1, \ldots, s$. 
 
\medskip

It was shown in \cite{GP16} that the $k$--jet of the tangent image 
$$\widetilde T_A^{(k)}(GA):= jet_k(\widetilde T_A(GA))$$  
is the image of $T_o$, where $T_o$ is the tangent map to the orbit map $o: G^{(k)} \to G^{(k)}jet_k(A)$,
$$T_o: T_eG^{(k)}  \to  T_A^{(k)}(GA) :=  T_{jet_k(A)}\left(G^{(k)}jet_k(A)\right). $$ 
Then $\widetilde T_A(GA)$ is the inverse limit of the inverse system of $R$-modules $\widetilde T_A^{(k)}(GA)$.

Moreover, we set
	$$T_A(GA):=	
	\mathop {\lim }\limits_{\mathop {\longleftarrow} \limits_{k\ge 0} } T_A^{(k)}(GA) \subset   M_{m,n} $$ 	
and call it the {\it\bf tangent space} at $A$ to the orbit $GA$. We have $\tilde T_A(GA)\subset T_A(GA)$, and if $K$ has characteristic zero then the equality holds (see \cite{GP16}). $\tilde T_A(GA)$ is contained in $\mathfrak{m}M_{m,n}$, the tangent space of $M_{m,n}$.

\begin{Theorem}
	Let $A\in \mathfrak{m}  M_{m,n}$ and $G$ one of the groups $\mathcal{R},\mathcal{G}_{l},\mathcal{G}_{r}$, and $\mathcal{G}_{lr}$. 
	\begin{enumerate}
	\item  \rm If $\mathfrak{m}^{p+1} M_{m,n}\subset \tilde T_A(GA)$ for some $p$, then $A$ is finitely $G$-determined. Moreover, $A$ is then $G$ $(2p-ord(A)+2)$-determined, where $ord(A)$ is the minimum order of the entries of $A$.
		\item \rm  If $n=1$, i.e. $A$ is a column matrix, and $G=\mathcal{G}_{lr},\mathcal{G}_{l}$,  then $A$ is finitely $G$-determined if and only if $\mathfrak{m}^{p+1} M_{m,n}\subset \tilde T_A(GA)$ for some $p$.
	\end{enumerate}
\end{Theorem}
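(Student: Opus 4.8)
The plan is to argue jet by jet, exploiting the identification $\widetilde T_A^{(k)}(GA)=\mathrm{im}(T_o)$ recalled above, together with the fact that each $G^{(k)}$ is an affine algebraic group acting algebraically on the affine space $M_{m,n}^{(k)}$. For part (1) I fix $k$ and consider, over the truncation $M_{m,n}^{(k+1)}\to M_{m,n}^{(k)}$, the fibre $F$ through $jet_{k+1}(A)$; as a variety $F=jet_{k+1}(A)+\mathfrak{m}^{k+1}M_{m,n}/\mathfrak{m}^{k+2}M_{m,n}$, and $A$ is $G$ $k$-determined exactly when $F$ lies in the $G^{(k+1)}$-orbit of $jet_{k+1}(A)$. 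The technical heart is a persistence lemma: if $\mathfrak{m}^{p+1}M_{m,n}\subseteq\widetilde T_A(GA)$ and $A'-A\in\mathfrak{m}^{p+1}M_{m,n}$, then $\mathfrak{m}^{p+1}M_{m,n}\subseteq\widetilde T_{A'}(GA')$. This follows by comparing the generators $E_{m,pq}A$, $A E_{n,hl}$, $\mathfrak{m}\,\partial A/\partial x_\nu$ with those for $A'$: each differs by a term in $\mathfrak{m}^{p+1}M_{m,n}\subseteq\widetilde T_A(GA)$, so expressing a given element of $\mathfrak{m}^{p+1}M_{m,n}$ in the generators of $\widetilde T_A(GA)$ and substituting realizes it, modulo a strictly higher-order error, in the generators of $\widetilde T_{A'}(GA')$; since a finitely generated submodule of $M_{m,n}$ is closed in the $\mathfrak{m}$-adic topology and $R$ is complete, iterating absorbs the error.

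Granting persistence, for $k+1\ge p+1$ the degree-$(k+1)$ part of $\widetilde T_{A'}(GA')$ is all of $\mathfrak{m}^{k+1}M_{m,n}/\mathfrak{m}^{k+2}M_{m,n}=T_{jet_{k+1}(A')}F$ at every point of $F$. As $\widetilde T_{A'}^{(k+1)}$ is the image of the differential of the orbit map, this says the orbit map of the subgroup of $G^{(k+1)}$ lying over the stabiliser of $jet_k(A)$ has surjective differential onto $TF$ everywhere on $F$; a morphism with surjective differential is smooth, so each such orbit is open in $F$, and since $F$ is irreducible it is a single orbit, whence $A$ is $G$ $k$-determined. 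I would then extract the bound by a book-keeping of orders: the generators of $\widetilde T_A(GA)$ have order $\ge ord(A)$, so realizing $\mathfrak{m}^{\,p+1}M_{m,n}$ through them costs coefficients of order $\ge p-ord(A)+1$; requiring the perturbation to dominate both the threshold $p+1$ at which the inclusion becomes available and this coefficient order forces $k=(p+1)+(p-ord(A)+1)=2p-ord(A)+2$. I expect the main obstacle here to be that in positive characteristic the action $\phi(A)=A\circ\phi$ has no Taylor or exponential description, so that one cannot integrate vector fields directly; the argument must stay at the level of orders, of completeness of $R$, and of smoothness of algebraic-group orbit maps, which is exactly what the jet formulation supplies.

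For part (2) only necessity remains, the sufficiency being (1). Suppose $A$ is $G$ $k$-determined. By the fibre description, $F$ is then a single $G^{(k+1)}$-orbit, hence smooth of dimension $\dim F$, so the tangent space to the orbit satisfies $T_A^{(k+1)}(GA)\supseteq T_{jet_{k+1}(A)}F$, and in the limit $\mathfrak{m}^{k+1}M_{m,1}\subseteq T_A(GA)$, i.e. $T_A(GA)$ has finite codimension. Here the hypothesis $n=1$ enters decisively. For a column $A$ and $G\in\{\mathcal{G}_l,\mathcal{G}_{lr}\}$ the generators $E_{m,pq}A$ span $I\cdot M_{m,1}$, where $I=\langle a_1,\dots,a_m\rangle$, so $\widetilde T_A(GA)=I\cdot M_{m,1}+\mathfrak{m}\langle\partial A/\partial x_\nu\rangle$ and $\mathfrak{m}^{p+1}M_{m,1}\subseteq\widetilde T_A(GA)$ for some $p$ is equivalent to $\dim_K M_{m,1}/\widetilde T_A(GA)<\infty$. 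Crucially, the left-multiplication part of the action is linear in $U$, hence separable, so these $I\cdot M_{m,1}$ directions belong to the genuine tangent image and are never lost to inseparability. I would therefore prove the contrapositive: if $\widetilde T_A(GA)$ has infinite codimension, choose for each $k$ a direction in $\mathfrak{m}^{k+1}M_{m,1}$ transverse to $\widetilde T_A(GA)$ and form $B\in A+\mathfrak{m}^{k+1}M_{m,1}$ along it; since the separable left-multiplication directions already exhaust $I\cdot M_{m,1}$, a direction transverse to $\widetilde T_A(GA)$ is transverse to the orbit, so $B\not\sim_G A$ and $A$ is not finitely determined.

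The main obstacle in part (2) is precisely the gap between the tangent space $T_A(GA)$ and the tangent image $\widetilde T_A(GA)$, which the excerpt itself shows can be strict in positive characteristic even for contact equivalence: one cannot simply equate the two, nor deduce finiteness of the codimension of $\widetilde T_A(GA)$ from that of $T_A(GA)$ by a naive differentiation of a family of equivalences, since the required family need not depend separably on its parameter. What makes the column case tractable is that the inseparable phenomena are confined to the coordinate-change factor $\mathcal{R}$, whose first-order effect is already recorded by $\mathfrak{m}\langle\partial A/\partial x_\nu\rangle$, while the module $I\cdot M_{m,1}$ coming from $GL(m,R)$ is seen faithfully by $\widetilde T_A(GA)$; the remaining work, and the delicate point I would expect to occupy most of the proof, is to verify that finite, possibly inseparable coordinate changes cannot by themselves carry $jet_{k+1}(A)$ into a direction transverse to $\widetilde T_A(GA)$, so that transversality to the tangent image does force genuine non-equivalence.
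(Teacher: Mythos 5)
Your ``persistence lemma'' --- the technical heart of your part (1) --- is false as stated, and the paper's own Example \ref{counterexmple} provides a counterexample. Take $\characteristic(K)=2$, $m=n=1$, $G=\mathcal{G}_l=\mathcal{K}$, $A=f=x^2+y^3$. Then $\widetilde T_f(\mathcal{K}f)=\langle x^2,xy^2,y^3\rangle\supseteq\mathfrak{m}^3$, so $p=2$; but $f'=f+y^3=x^2$ satisfies $f'-f=y^3\in\mathfrak{m}^{p+1}K[[x,y]]$, while $\widetilde T_{f'}(\mathcal{K}f')=\langle x^2\rangle$, which contains no power of $\mathfrak{m}$ at all. (This is not even a characteristic-$p$ phenomenon: in characteristic $0$ the perturbation $f-y^3=x^2$ gives $\widetilde T_{x^2}=\langle x^2,xy\rangle\not\supseteq\mathfrak{m}^3$.) The flaw in your argument is quantitative: when you express $\xi\in\mathfrak{m}^{p+1}M_{m,n}$ in the generators of $\widetilde T_A(GA)$, the coefficients may be units, so after substituting the generators of $\widetilde T_{A'}(GA')$ the error lies only in $\mathfrak{m}^{p+1}M_{m,n}$ again --- the \emph{same} order, not ``strictly higher-order'' --- and your iteration has no contraction, exactly as the example shows ($y^3=y\cdot\partial_yf$, and replacing $f$ by $f'$ loses $y^3$ irrecoverably). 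A Nakayama argument repairs the lemma only when the error lands in $\mathfrak{m}\cdot\mathfrak{m}^{p+1}M_{m,n}$, i.e.\ when $A'-A\in\mathfrak{m}^{k+1}M_{m,n}$ with $k\geq p+1$; this unavoidable gap between the perturbation order and $p$ is precisely why the theorem's bound is $2p-\order(A)+2$ and not $p$ or $p+1$, whereas your bound-extraction paragraph is disconnected from (and contradicts) the lemma you base the proof on.

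There are two further genuine gaps. First, even with a corrected persistence lemma, your openness argument confuses two different differentials: openness of orbits \emph{inside} the fibre $F$ requires the tangent image of the subgroup $H\subset G^{(k+1)}$ lying over the stabilizer of $jet_k(A)$ (this is the group that actually preserves $F$) to fill $T_{A'}F$, but what persistence gives you is that the image of the differential of the \emph{full} orbit map, $\widetilde T^{(k+1)}_{A'}(GA')$, contains $T_{A'}F$. The directions filling $T_{A'}F$ typically come from group elements that move the $k$-jet, hence are not in the Lie algebra of $H$; and tangency of the full orbit $O$ to $F$ at every point does not make $O\cap F$ open in $F$ (a smooth curve tangent to a line may meet it in a single point). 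Estimating the much smaller $H$-tangent image (generators multiplied by high powers of $\mathfrak{m}$, since elements of $H$ are congruent to the identity to high order) is where the exponent $2p-\order(A)+2$ is actually consumed. Also, your equivalence ``$A$ is $k$-determined iff $F$ lies in the $G^{(k+1)}$-orbit of $jet_{k+1}(A)$'' needs a proof of the ``if'' direction: a single jet level never yields equivalence over $R$; one must iterate over all levels and pass to the limit using completeness. Second, in part (2) the necessity direction is not proved: the step ``a direction transverse to $\widetilde T_A(GA)$ is transverse to the orbit'' is precisely the assertion $\widetilde T_A(GA)=T_A(GA)$ in the relevant degrees, which is the very statement the paper shows can \emph{fail} (for $\mathcal{K}$ in characteristic $2$); you flag this as ``the delicate point'', but that point is the theorem. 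For comparison: the paper gives no in-text proof, deferring (1) to \cite[Prop.~4.2]{GP16}, whose argument constructs the equivalence $g$ iteratively jet by jet --- at each step solving only a first-order equation with controlled coefficient orders, absorbing the higher-order error terms, and converging $\mathfrak{m}$-adically by completeness of $R$, with no orbit geometry needed --- and (2) to \cite[Theorem 3.8]{GP17}, where the hard implication requires a separate, substantial argument rather than the transversality heuristic you propose.
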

For the proof of 1. we refer to  \cite[Prop. 4.2]{GP16} and for 2. to \cite[Theorem 3.8]{GP17}.

\section{Algorithms for the tangent image}\label{section 3}
By Theorem 2.1, the finiteness of  of the codimension of the tangent image $\widetilde T_A(GA)$, equivalent to the existence of a power $p$ of the maximal ideal $\mathfrak{m}$ such that $\mathfrak{m}^{p+1} M_{m,n}\subset \tilde T_A(GA)$, is a sufficient condition for $A$ to be finitely $G$-determined.
The following very simple algorithms compute a local standard basis of tangent image $\widetilde T_A(GA)$, a vector space basis of  $M_{m,n}/\widetilde T_A(GA)$ and the codimension of $\widetilde T_A(GA)$ in $M_{m,n}$.

Theoretically the matrix $A$ has arbitrary power series $a_{ij}$
as entries, but we assume in the algorithms below that the $a_{ij}$ are polynomials, since we can compute only with polynomial data. The output is then also polynomial. If $A$ is finitely $G$-determined then $A$ is $G$-equivalent to a matrix with polynomial entries, but finite $G$-determinacy is not assumed in the algorithms below.
By the following remark the result is nevertheless correct over the power series ring.

\begin{Remark}\rm
	Let $>$ be a local degree ordering on (the monomials of) $K[x]$  and $(>,c)$ a module ordering on $K[{\bf x}]^n$ giving priority to the monomials in  $K[x]$, see  \cite[Definition 2.3.1]{GP07}, which will also be denoted by $>$.
	
	If $N_1,\ldots, N_r$ is a standard basis w.r.t. $>$ of a submodule $M\subset K[{\bf x}]^n$, then
	\begin{enumerate}
		\item $N_1,\ldots, N_r$ generate $M\otimes_{K[{\bf x}]} K[{\bf x}]_>$ over the localization $K[{\bf x}]_>$ of $K[{\bf x}]$ w.r.t. $>$.
		\item $N_1,\ldots, N_r$ is a standard basis of $M\otimes_{K[{\bf x}]} K[[{\bf x}]]$ w.r.t. $>$ and hence generates $  M=M\otimes_{K[{\bf x}]} K[[{\bf x}]]$ over $K[[{\bf x}]]$
	\end{enumerate}
	For the proof, see \cite[Lemma 2.3.5 and Theorem 6.4.3]{GP07}  (there it is only stated for ideals but it works with the same proof for modules).
\end{Remark}
 
 For the rest of the paper we fix a local degree ordering $>$ on $K[{\bf x}]$ and a module ordering $(>,c)$ on $M_{m,n}$, also denoted by $>$ (cf. \cite {GP07}).
 
 We start with the computation of a standard basis for the tangent image w.r.t. the group 
 $\mathcal{G}_{lr}$. As the algorithms for the other groups 
$\mathcal{R}, \mathcal{G}_l, \mathcal{G}_r$ are simplifications they are omitted. 

\vskip 7pt
{\bf{Algorithm 1:}} TangIm$_G$ (for $G=\mathcal{G}_{lr}$)

{\bf{Input:}} A matrix $A=[a_{ij}]\in \mathfrak{m}M_{m,n}$ (with $a_{ij}\in \langle {\bf x} \rangle K[{\bf x}]$).

{\bf{Output:}} matrices $N_1,\ldots, N_r\in M_{m,n}$, being a standard basis of the tangent image $\widetilde T_A(GA)$ w.r.t $>$.

1. Compute $M:=\langle E_{m,pq}\cdot A, p, q=1,\ldots, m\rangle + \langle A\cdot E_{n,hl}, h,l=1,\ldots,n\rangle$.

2. Compute $N:=\left\langle \frac{\partial A}{\partial x_\nu}, \nu=1,\ldots,s\right\rangle$.

3. Compute a standard basis $S=\{N_1,\ldots, N_r\}$ of $M+ \langle x_1, \cdots, x_s \rangle N$ w.r.t. $>$.

4. {\bf {Return:}} $S$.

\vskip 7pt

The following algorithm computes a $K$-basis of $M_{m,n}/\widetilde T_A(GA)$ and its codimension $ c = \dim(M_{m,n}/\widetilde T_A(GA))$.
\vskip 7pt
{\bf{Algorithm 2:}} BasisCodimTangIm$_G$

{\bf{Input:}} $A=[a_{ij}]\in \mathfrak{m}M_{m,n}$, specification of $G$

{\bf{Output:}} matrices $M_1,\ldots, M_c\in M_{m,n}$, being a $K$-basis of $M_{m,n}/\widetilde T_A(GA)$ together with $c$, or -1 if $\dim_K(M_{m,n}/\widetilde T_A(GA))=\infty$.

1. Compute a standard basis $S=\{N_1,\ldots, N_r\}$ of $\widetilde T_A(GA)$ with Algorithm 1 for $G$.

2. Let $L_1,\ldots, L_r$ be the leading monomials of $N_1,\ldots, N_r$, respectively.

3. If $\dim_K\left(M_{m,n}/\langle  L_1,\ldots, L_r\rangle\right)=\infty$, 

\ \ \ \ {\bf {Return:}} -1

4. Else compute matrices $M_1,\ldots, M_c\subset M_{m,n}$ being a $K$-basis of $M_{m,n}/\langle L_1,\ldots, L_r\rangle$. 

5. {\bf {Return:}} $M_1\ldots, M_c, \ c$.

\begin{Remark}\rm
	The computation of a $K$-basis of $M_{m,n}/\langle L_1,\ldots, L_r\rangle$ is a combinatorial task. This basis is also a $K$--basis of $M_{m,n}/\widetilde T_A(GA)$. The {\sc Singular} (\cite {DGPS16}) command $K$-base computes the $K$-basis internally along these lines and returns a $K$-basis consisting of matrices with entries being monomials.
\end{Remark}

The following algorithm computes the {\em pre-determinacy bound}, i.e. the minimal $p$ such that $\mathfrak{m}^{p+1} M_{m,n}\subset \tilde T_A(GA)$, using a normal form algorithm NF w.r.t. a local monomial ordering (cf. \cite{GP07}). Then we compute a $G$--determinacy bound for $A$.
\vskip 7pt 

{\bf{Algorithm 3:}} predeterm$_G$ 

{\bf Input:}  $A=[a_{ij}]\in \mathfrak{m}M_{m,n}$, specification of $G$

{\bf Output:}  integer $p$, the pre-determinacy bound for $A$ w.r.t. the group $G$, or -1 if codimension of $\widetilde T_A(GA)$ is infinite.

1. Compute a standard basis $S$ of the tangent image $\widetilde T_A(GA)$ by Algorithm 1 for $G$.

2. Compute the codimension $c$ of $\widetilde T_A(GA)$ by Algorithm 2.

3. If $c = -1$

\ \ \ \ {\bf {Return:}} --1.

4. Else loop

\hskip 30pt p:=0.

\hskip 30pt while (size($NF(\mathfrak{m}^{p+1}M_{m,n},S))>0$)

\hskip 50pt  p:=p+1.

5. {\bf {Return:}} p.

\vskip 7pt

{\bf Algorithm 4:} determ$_G$

{\bf Input:} $A=[a_{ij}]\in \mathfrak{m}M_{m,n}$, specification of $G$

{\bf Output:} an integer $d$, a determinacy bound of $A$ w.r.t. the group $G$, or -1 if the codimension of $\widetilde T_A(GA)$ is infinite.

1. Compute $o=ord(A)$, the order of the matrix $A$.

2. Compute $p$, the pre-determinacy bound of $A$ w.r.t. $G$ by Algorithm 3.

3. If $p = -1$

\ \ \ \ {\bf {Return:}} --1.

4. Else compute $d=2p-o+2$.

5. {\bf {Return:}} $d$.

\section{Algorithms for the tangent space}\label{section 4}

In this section we give an algorithm to compute equations for the closure of the orbit $G^{(k)}A\subset M_{m,n}^{(k)}$ for $G$ one of the groups $\mathcal{R}$, $\mathcal{G}_l$, $\mathcal{G}_r$, $\mathcal{G}_{lr}$, and for the codimension of $G^{(k)}A$ in $M_{m,n}^{(k)}$. The orbit $G^{(k)}A$ is a locally closed subvariety of the affine space $M_{m,n}^{(k)}$. Here an algebraic variety is, as usual,  considered as  a set with the Zariski topology over the algebraic closure $\bar K$, defined over $K$. 

For the application in section 5, we are only interested in the dimension of the tangent space to  $G^{(k)}A$ at $A$. However, in positive characteristic the tangent space may not coincide with the tangent image and therefore we cannot use the algorithms of the previous section.
We do not know any other method to compute the dimension of the tangent space to the orbit, except by computing the dimension of the orbit itself (in positive characteristic).
\vskip 7pt

The following theorem is the basis for our applications.
\begin{Theorem} \label{th4.1}
	Let $G$ be any of the groups  $\mathcal{R}$, $\mathcal{G}_l$, $\mathcal{G}_r$, and $\mathcal{G}_{lr}$, acting on $  M_{m,n}$. Assume that the tangent image $\widetilde T_A(GA)$ has finite codimension in $M_{m,n}$. Let $p$ be the pre-determinacy bound for $A$.  For $k \geq p$ the following holds:
	\begin{enumerate}
		\item $A$ is $G \  (2k -ord(A) + 2)$--determined.
		\item $\dim_K M_{m,n}/\widetilde T_A(GA) = \dim_K M^{(k)}_{m,n}/\widetilde T^{(k)}_A(GA)$.
		\item The tangent space $T_A(GA)$ to the orbit $GA$ has finite codimension in $  M_{m,n}$.
		\item $\dim_K   M_{m,n}/T_A(GA)=\dim_K M_{m,n}^{(k)}/T^{(k)}_A(GA) =  \dim M_{m,n}^{(k)} - \dim G^{(k)}A$.
		
		\item $\dim G^{(k)}A=\dim G^{(k)}-\dim G_A^{(k)}$, where $G_A^{(k)}$ denotes the stabilizer of $A$ in $G^{(k)}$.
	\end{enumerate}
\end{Theorem}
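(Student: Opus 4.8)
The plan is to treat the five assertions in three groups: (1) and (2) are formal consequences of the hypothesis $\mathfrak{m}^{p+1}M_{m,n}\subset\widetilde{T}_A(GA)$, (5) is the orbit--stabilizer theorem for the algebraic group $G^{(k)}$, and the substance lies in (3) and (4). For (1) I would observe that $k\ge p$ gives $\mathfrak{m}^{k+1}M_{m,n}\subset\mathfrak{m}^{p+1}M_{m,n}\subset\widetilde{T}_A(GA)$, so Theorem 2.1(1) applies with $k$ in the role of $p$ and yields the bound $2k-ord(A)+2$. For (2), since $k\ge p$ forces $\mathfrak{m}^{k+1}M_{m,n}\subset\widetilde{T}_A(GA)$, the image $\widetilde{T}^{(k)}_A(GA)=jet_k(\widetilde{T}_A(GA))$ equals $\widetilde{T}_A(GA)/\mathfrak{m}^{k+1}M_{m,n}$, and the third isomorphism theorem identifies $M^{(k)}_{m,n}/\widetilde{T}^{(k)}_A(GA)$ with $M_{m,n}/\widetilde{T}_A(GA)$; the dimensions agree and are finite by hypothesis.

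For (5) I would use that $G^{(k)}$ is an affine algebraic group acting algebraically on the affine space $M^{(k)}_{m,n}$ (as recorded in \S2), so the orbit map $G^{(k)}\to G^{(k)}A$ is a surjective morphism whose fibres are cosets of $G^{(k)}_A$, giving $\dim G^{(k)}A=\dim G^{(k)}-\dim G^{(k)}_A$. The geometric input for (3) and (4) is that the orbit $G^{(k)}A$ is a smooth locally closed subvariety of $M^{(k)}_{m,n}$, so $T^{(k)}_A(GA)$ is its tangent space and $\dim_K T^{(k)}_A(GA)=\dim G^{(k)}A$; this already gives the last equality in (4), namely $\dim_K M^{(k)}_{m,n}/T^{(k)}_A(GA)=\dim M^{(k)}_{m,n}-\dim G^{(k)}A$. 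For (3), since $\widetilde{T}_A(GA)\subset T_A(GA)$ and $\widetilde{T}_A(GA)\supset\mathfrak{m}^{p+1}M_{m,n}$, we get $\mathfrak{m}^{p+1}M_{m,n}\subset T_A(GA)$, whence $T_A(GA)$ has codimension at most $\dim_K M_{m,n}/\mathfrak{m}^{p+1}M_{m,n}<\infty$.

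The crux, and the step I expect to be the main obstacle, is the stabilization underlying the first equality in (4). Let $\pi\colon M^{(k+1)}_{m,n}\to M^{(k)}_{m,n}$ be the truncation, with kernel $V_k:=\mathfrak{m}^{k+1}M_{m,n}/\mathfrak{m}^{k+2}M_{m,n}$, and $\tau\colon G^{(k+1)}\to G^{(k)}$ the truncation of the group; then $\pi$ carries $G^{(k+1)}A$ onto $G^{(k)}A$, so $d\pi$ maps $T^{(k+1)}_A(GA)$ into $T^{(k)}_A(GA)$. For $k\ge p$ one has $\mathfrak{m}^{k+1}M_{m,n}\subset\widetilde{T}_A(GA)$, hence $V_k=jet_{k+1}(\mathfrak{m}^{k+1}M_{m,n})\subset\widetilde{T}^{(k+1)}_A(GA)\subset T^{(k+1)}_A(GA)$; in particular $V_k$ is exactly the kernel of $d\pi|_{T^{(k+1)}_A(GA)}$. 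I would then show $d\pi|_{T^{(k+1)}_A(GA)}$ is onto, equivalently that the fibre of $G^{(k+1)}A\to G^{(k)}A$ over $jet_k(A)$ is dense in the affine subspace $jet_k(A)+V_k$. This is where positive characteristic bites: infinitesimal information alone yields only the inequality $\dim G^{(k+1)}A-\dim G^{(k)}A\le\dim V_k$, and one must exhibit actual orbit points. The key is that $V_k\subset\widetilde{T}^{(k+1)}_A(GA)$ means every class in $V_k$ is the image, under the differential at the identity of the orbit map, of a group direction whose $k$-jet vanishes: lifting $v\in V_k\subset\mathfrak{m}^{k+1}M_{m,n}$ to a combination of the generators $E_{m,pq}A$, $AE_{n,hl}$, $\mathfrak{m}\,\partial A/\partial x_\nu$ produces directions tangent to $v$ inside $H:=\tau^{-1}(G^{(k)}_A)$. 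Thus the morphism $H\to V_k$, $h\mapsto h\cdot jet_{k+1}(A)-jet_{k+1}(A)$, has surjective differential at the identity, hence is dominant, so its image lies in the fibre and is dense in $jet_k(A)+V_k$. Therefore $\dim G^{(k+1)}A=\dim G^{(k)}A+\dim V_k$ and $d\pi|_{T^{(k+1)}_A(GA)}$ is surjective.

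Finally I would assemble (4). Surjectivity of $d\pi$ together with $V_k=\ker(d\pi)\cap T^{(k+1)}_A(GA)\subset T^{(k+1)}_A(GA)$ gives, via $\pi$, an isomorphism $M^{(k+1)}_{m,n}/T^{(k+1)}_A(GA)\xrightarrow{\ \sim\ }M^{(k)}_{m,n}/T^{(k)}_A(GA)$ for every $k\ge p$, so the codimensions are constant for $k\ge p$ and the transition maps of the inverse system $\{M^{(k)}_{m,n}/T^{(k)}_A(GA)\}$ are isomorphisms. Since $\mathfrak{m}^{p+1}M_{m,n}\subset T_A(GA)$ and the system $\{T^{(k)}_A(GA)\}$ is Mittag--Leffler, the inverse-limit exact sequence identifies $M_{m,n}/T_A(GA)$ with $\varprojlim M^{(k)}_{m,n}/T^{(k)}_A(GA)=M^{(k)}_{m,n}/T^{(k)}_A(GA)$ for any $k\ge p$. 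Combining this with the equality already noted from smoothness of the orbit completes (4).
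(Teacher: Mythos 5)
Your treatment of (1), (2), (3), (5) and of the second equality in (4) is correct and coincides with the paper's proof: (1) is Theorem 2.1(1) applied with $k$ in place of $p$, (2) is the third isomorphism theorem using $\mathfrak{m}^{k+1}M_{m,n}\subset\widetilde T_A(GA)$, (3) follows from the inclusions $\mathfrak{m}^{p+1}M_{m,n}\subset\widetilde T_A(GA)\subset T_A(GA)$, (5) is the orbit--stabilizer theorem for algebraic group actions, and the last equality in (4) is smoothness of the orbit. You are also right that the only place where real work is hidden is the first equality in (4): since $T_A(GA)$ is defined as an inverse limit of the $T^{(k)}_A(GA)$, one needs $jet_k\left(T_A(GA)\right)=T^{(k)}_A(GA)$, i.e.\ (eventual) surjectivity of the transition maps $T^{(j+1)}_A(GA)\to T^{(j)}_A(GA)$; the paper's one-line justification (``follows from $\mathfrak{m}^{k+1}M_{m,n}\subset T_A(GA)$'') leaves this implicit and in effect defers it to \cite{GP16}.

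However, your proof of that surjectivity has a genuine gap, and it is a positive-characteristic gap of exactly the kind the paper is about. Your key claim is that writing $v\in V_k\subset\mathfrak{m}^{k+1}M_{m,n}\subset\widetilde T_A(GA)$ in terms of the generators $E_{m,pq}A$, $AE_{n,hl}$, $\mathfrak{m}\frac{\partial A}{\partial x_\nu}$ yields $\xi\in T_eG^{(k+1)}$ with $T^{(k+1)}_o(\xi)=jet_{k+1}(v)$ which is \emph{tangent to} $H=\tau^{-1}(G^{(k)}_A)$. Tangency to $H$ means $d\tau(\xi)\in T_eG^{(k)}_A$, the tangent space of the \emph{reduced} stabilizer variety; what your construction actually guarantees is only $T^{(k)}_o(d\tau(\xi))=jet_k(v)=0$, i.e.\ that $d\tau(\xi)$ lies in the kernel of the tangent map of the orbit map, which is the Lie algebra of the scheme-theoretic stabilizer. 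In characteristic $p$ this kernel can strictly contain $T_eG^{(k)}_A$ --- that discrepancy \emph{is} non-separability of the orbit map, the very phenomenon under investigation. The paper's example makes this concrete: for $f=x^2+y^3$ in characteristic $2$ and $k=p=2$, the stabilizer ideal of $jet_2(f)=x^2$ is $\langle u_0a^2-1,\,u_0b^2\rangle$, so the kernel of the tangent map is $\{du_0=0\}$ while $T_eG^{(2)}_f=\{du_0=0,\,db=0\}$; since $\partial f/\partial x=0$, any lift may be modified by the direction $x\mapsto x+\epsilon y$ (changing $db$ but not $T_o(\xi)$), and such a lift is \emph{not} tangent to $H$. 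You give no argument that a good lift always exists. If instead you take $H$ with its scheme structure so that your lifts are automatically ``tangent'', then the next step collapses: a surjective differential at a point of a non-reduced scheme does not imply dominance (e.g.\ $V(b^2)\subset\A^2\to\A^1$, $(a,b)\mapsto b$, has surjective differential at $0$ but constant reduced image). More generally, in characteristic $p$ dominance cannot be read off from differentials at all (Frobenius is dominant with zero differential), so the whole strategy of deducing density of the fibre from an infinitesimal computation is fragile.

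The argument that actually works in any characteristic avoids tangent vectors entirely and uses determinacy, i.e.\ part (1) of the theorem. If $A$ is $G$ $d$-determined and $k\geq d$, take $B\in M^{(k+1)}_{m,n}$ with $\pi(B)\in G^{(k)}A$; choosing $g\in G$ lifting the relevant group element and $\tilde B\in M_{m,n}$ lifting $B$, one gets $g^{-1}\tilde B-A\in\mathfrak{m}^{k+1}M_{m,n}\subset\mathfrak{m}^{d+1}M_{m,n}$, so $d$-determinacy gives $\tilde B\in GA$ and hence $B\in G^{(k+1)}A$. Thus $G^{(k+1)}A=\pi^{-1}\left(G^{(k)}A\right)$ as sets, which immediately yields surjectivity of $T^{(k+1)}_A(GA)\to T^{(k)}_A(GA)$ with kernel $V_k$, and therefore the stabilization of the codimensions and the first equality in (4). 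Note that this settles the claim for $k$ at least a determinacy bound $2p-ord(A)+2$; on the remaining range $p\le k<2p-ord(A)+2$ an additional argument is needed, which neither your proposal (because of the gap above) nor the paper's terse proof makes explicit.
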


\begin{proof}
1. Was proved in \cite{GP16}. \\
2. This follows from $\mathfrak{m}^{k+1}  M_{m,n}\subset \widetilde T_A(GA)$. \\
3. Follows by assumption from  $\widetilde T_A(GA)\subset T_A(GA) \subset M_{m,n}$. \\
4. The first equality follows from $\mathfrak{m}^{k+1}  M_{m,n}\subset T_A(GA)$,
 the second since the orbit is smooth and has the same dimension as its tangent space.	\\
5. Is well known (cf. e.g. \cite[Theorem 3.7]{FSR05}). The dimension of an affine variety means the Krull dimension of its coordinate ring.
\end{proof}

Since the tangent image of an algebraic group action coincides with the tangent space iff the orbit map is separable (cf. \cite[Theorem 3.7]{FSR05}), we get:

\begin{Corollary}
 With the assumptions of Theorem \ref {th4.1} the following are equivalent:
 	\begin{enumerate}
		\item The orbit map $o: G^{(k)}\to G^{(k)}A, \ g\mapsto jet_k(gA)$, is separable.
                \item  $\widetilde T_A(GA)=  T_A(GA)$.
                \item $\dim_K M_{m,n}/\widetilde T_A(GA) = \dim M^{(k)}_{m,n} - \dim G^{(k)} + \dim G^{(k)}_A$
        \end{enumerate}
\end{Corollary}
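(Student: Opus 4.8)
The plan is to prove the chain of equivalences by transporting the limit-level statements (2) and (3) down to the $k$-jet objects $\widetilde T^{(k)}_A(GA)$ and $T^{(k)}_A(GA)$ inside $M^{(k)}_{m,n}$, where the quoted separability criterion from \cite{FSR05} and the dimension identities of Theorem \ref{th4.1} apply verbatim. The whole point is that, under the finite-codimension hypothesis and for $k\ge p$, nothing is lost in passing to $k$-jets.

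First I would settle (1) $\Leftrightarrow$ (2). Since $k\ge p$ we have $\mathfrak{m}^{k+1}M_{m,n}\subset\widetilde T_A(GA)\subset T_A(GA)$, so each of these two submodules equals the full preimage of its $k$-jet under the projection $M_{m,n}\to M^{(k)}_{m,n}$ (a submodule $N$ with $\mathfrak{m}^{k+1}M_{m,n}\subset N$ satisfies $N=N+\mathfrak{m}^{k+1}M_{m,n}$, hence is recovered from its image). Therefore $\widetilde T_A(GA)=T_A(GA)$ in $M_{m,n}$ if and only if $\widetilde T^{(k)}_A(GA)=T^{(k)}_A(GA)$ in $M^{(k)}_{m,n}$. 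Now $\widetilde T^{(k)}_A(GA)$ is, by the construction recalled in Section \ref{section 2}, exactly the image of the tangent map $T_o$ to the orbit map $o$, while $T^{(k)}_A(GA)$ is the full tangent space to the orbit $G^{(k)}A$; the cited \cite[Theorem 3.7]{FSR05} asserts precisely that image and tangent space agree if and only if $o$ is separable. Chaining the two biconditionals yields (1) $\Leftrightarrow$ (2).

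Next I would handle (2) $\Leftrightarrow$ (3) by a dimension count at jet level. By part 2 of Theorem \ref{th4.1}, $\dim_K M_{m,n}/\widetilde T_A(GA)=\dim_K M^{(k)}_{m,n}/\widetilde T^{(k)}_A(GA)=\dim M^{(k)}_{m,n}-\dim_K\widetilde T^{(k)}_A(GA)$. On the other hand, parts 4 and 5 give $\dim_K T^{(k)}_A(GA)=\dim G^{(k)}A=\dim G^{(k)}-\dim G^{(k)}_A$. Substituting both expressions, the right-hand side of (3) rewrites as $\dim M^{(k)}_{m,n}-\dim_K T^{(k)}_A(GA)$, so (3) is equivalent to $\dim_K\widetilde T^{(k)}_A(GA)=\dim_K T^{(k)}_A(GA)$. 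Because $\widetilde T^{(k)}_A(GA)\subset T^{(k)}_A(GA)$, equality of dimensions of these finite-dimensional spaces is the same as equality of the spaces themselves, which by the first paragraph is exactly (2).

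The only delicate point is the bookkeeping in this reduction to the $k$-jet level: I must verify that the inclusion $\mathfrak{m}^{k+1}M_{m,n}\subset\widetilde T_A(GA)$ (guaranteed for $k\ge p$ by the finite-codimension assumption, via Theorem \ref{th4.1}) and the analogous inclusion into $T_A(GA)$ genuinely identify each limit module with the preimage of its $k$-jet, and that the identity $\dim_K T^{(k)}_A(GA)=\dim G^{(k)}A$ used above is legitimate, which is the smoothness-of-the-orbit step already invoked in part 4. Once this dictionary between $M_{m,n}$ and $M^{(k)}_{m,n}$ is in place, both equivalences follow formally, so the substance of the corollary is carried entirely by the separability criterion of \cite{FSR05} and the dimension formulas of Theorem \ref{th4.1}.
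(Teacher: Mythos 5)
Your proposal is correct and follows essentially the same route as the paper: the paper derives the corollary directly from the separability criterion of \cite[Theorem 3.7]{FSR05} (tangent image equals tangent space of the orbit iff the orbit map is separable) combined with the dimension identities of Theorem \ref{th4.1}, which is exactly what you do. Your write-up merely makes explicit the bookkeeping the paper leaves implicit, namely that for $k\ge p$ both $\widetilde T_A(GA)$ and $T_A(GA)$ contain $\mathfrak{m}^{k+1}M_{m,n}$ and are therefore recovered as preimages of their $k$-jets, so the jet-level equivalence transports faithfully to the limit level.
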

\medskip

$ M^{(k)}_{m,n}$ is an affine space of dimension $t=mn\binom{s+k}{k}$ with coordinate ring $K[u]=K[u_1,\ldots, u_t]$ and $G^{(k)}A$ is a (locally closed) subvariety of $ M^{(k)}_{m,n}$ of a certain dimension which we want to know. 
	
	Our first algorithm computes polynomials $F_1,\ldots, F_r\in K[u]$ by elimination defining $G^{(k)}A$ set theoretically. Then we compute the dimension by computing a standard basis of $\langle F_1,\ldots, F_r\rangle$. This approach has the disadvantage that the dimensions of $G^{(k)}$ and $M^{(k)}_{m,n}$ are quite big, already for small $m, n, k$, and we have to compute standard basis with respect to an elimination ordering in rings with many variables.
	
	Our second algorithm computes polynomials defining the stabilizer $G^{(k)}_A\subset G^{(k)}$ of $A$ and its dimension can be computed by a standard basis w.r.t. any ordering. 
	This algorithm involves less variables and is preferred if we are only interested in the dimension of orbit and not its equations. 
\vskip 7pt
	
	The main challenge is to put the equations in the right form. Let us first consider the right group $G^{(k)}=\mathcal{R}^{(k)}$. An element $g\in \mathcal{R}^{(k)}$ is given by $s$ polynomials in $K[{\bf x}]^{(k)} = jet_k(K[x_1,\ldots, x_s])$, which can be written as
	\begin{align*}
	g_i(x_1,\ldots, x_s)=\sum\limits_{j=1}^s(\delta_{ij}+g_{ij})x_j+\sum\limits_{|a|=2}^kh_{ia}{\bf x}^a, \hskip 3pt a=(a_1,\ldots, a_s),\tag{*}
	\end{align*}
	with $\delta_{ij}$ the Kronecker symbol, $1=[\delta_{ij}]$ the identity matrix and $\det(1+g_{ij})\ne 0$.
	
	If $A=[a_{ij}({\bf x})]\in M_{m,n}^{(k)}$ then $g$ acts on $A$ by substitution and taking $k$-jets, i.e.
	$$gA=jet_k[a_{ij}(g)]=[jet_k(a_{ij}(g_1({\bf x}),\ldots, g_s({\bf x}) ))].$$
	Let $G_{ij}, H_{ia}$, $i,j=1,\ldots, s$, $2\le|a|\le k$, be new variables. Then the group $\mathcal{R}^{(k)}$ is the affine variety defined as the complement of $\det(1+G_{ij})=0$ in the affine space of dimension $\dim \mathcal{R}^{(k)} = s\binom{s+k}{k}-s$ with coordinates $G=(G_{ij}, H_{ia})$.
\vskip 7pt
	
	If $G^{(k)}=\mathcal{G}^{(k)}_{lr}$ then an element $g\in G^{(k)}$ is given by $g_i(x_1,\ldots, x_s)$ as in (*) above and in addition by matrices
	$$[u_{ij}({\bf x})]\in GL(m, K[{\bf x}]^{(k)}), \hskip 10pt  i,j=1,\ldots,m,$$
	$$u_{ij}=\delta_{ij}+u_{ij0}+\sum\limits_{|a|=1}^k u_{ija}{\bf x}^a, \hskip 10pt \det(1+[u_{ij0}])\ne 0.$$
	$$[v_{ij}({\bf x})]\in GL(n, K[{\bf x}]^{(k)}), \hskip 10pt  i,j=1,\ldots,n,$$
	$$v_{ij}=\delta_{ij}+v_{ij0}+\sum\limits_{|a|=1}^k v_{ija}{\bf x}^a, \hskip 10pt \det(1+[v_{ij0}])\ne 0.$$
	$([u_{ij}]$, g, $[v_{ij}])$ acts on $ M^{(k)}_{m,n}$ by
	$$([u_{ij}], g, [v_{ij}])A=jet_k([u_{ij}] [a_{ij}(g)] [v_{ij}]).$$
	$\mathcal{G}_{lr}^{(k)}$ is an  affine variety of dimension $N=(m^2+n^2+s)\binom{s+k}{k}-s$. If $U_{ija}$, $i,j\in \{1,\ldots,m\}$, $0\le |a|\le k$ and $V_{ija}$, $i,j\in \{1,\ldots,n\}$, $0\le |a|\le k$ are new variables, then $\mathcal{G}_{lr}^{(k)}$ is the complement of the hypersurface
	$$\det:=\det(1+[G_{ij}])\det(1+[U_{ij0}])\det(1+[V_{ij0}])=0$$
	in the affine space $K^N$ with coordinates $U=(U_{ija})$, $i,j\in \{1,\ldots,m\}$, $0\le |a|\le k$, $V=(V_{ija})$, $i,j\in \{1,\ldots,n\}$, $0\le |a|\le k$,  and $G=(G_{ij}, H_{ia})$ $i,j\in \{1,\ldots,n\}$, $2\le |a|\le k$.
	
	The other groups $G^{(k)}$ are special cases of $\mathcal{G}^{(k)}_{lr}$.
\vskip 7pt
	
	By our choice of coordinates $U_{ij0}$, $V_{ij0}$, and $G_{ii}$, the groups $G^{(k)}$ pass through $0\in K^N$ where $0$ corresponds to the identity in $G^{(k)}$. This allows us to compute in the polynomial ring $K[U,V,G]$ as well as in the localization $K[U,V,G]_>$ with $>$ a local ordering. Note that det  is a unit in $K[U,V,G]_>$, hence the condition det $\neq 0$ is automatic in this ring.
\vskip 7pt
	
	The algorithm for describing the orbit of $G^{(k)}=\mathcal{G}_{lr}^{(k)}$ can be described as follows:
	
	\vskip 7pt
	{\bf Algorithm 5:} OrbitEq ($ G=\mathcal{G}_{lr}$)
	
	{\bf Input:}  integer $k$, matrix $A=[a_{ij}]\in M_{m,n}^{(k)}$
	 
		{\bf Output:} polynomials $F_1,\ldots, F_r\in K[u_1,\ldots, u_t]$, $t= \dim M^{(k)}_{m,n}$, such that the variety $V(F_1,\ldots, F_s)$ coincides with the closure of $\mathcal{G}_{lr}^{(k)}A$.
		
1. In the polynomial ring $K[{\bf x}, {U}, V, G]=K[x_1,\ldots, x_s, U_{ija}, V_{ija}, G_{ij}, H_{i,a}]$ define the polynomials
$$G_i=\sum\limits_{j=1}^s(\delta_{ij}+G_{ij})x_j+\sum\limits_{|a|=2}^kH_{ia}{\bf x}^a, \hskip 10pt i=1,\ldots, s,$$
$$U_{ij}=\delta_{ij}+U_{ij0}+\sum\limits_{|a|=1}^kU_{ija}{\bf x}^a\hskip 10pt i,j=1,\ldots, m,$$
$$V_{ij}=\delta_{ij}+V_{ij0}+\sum\limits_{|a|=1}^kV_{ija}{\bf x}^a\hskip 10pt i,j=1,\ldots, n.$$

2. Construct the matrix $B=[b_{ij}]\in Mat(m,n, K[{\bf x}, U, V, G])$ with
$$b_{ij}=jet_k\left([U_{ij}] \cdot [a_{ij}(G_1,\ldots, G_s)] \cdot [V_{ij}]\right),$$
where the $k$-jet is taken w.r.t. ${\bf x}$.

3. Write the polynomials $b_{ij}$ as $\sum\limits_{|a|=0}^k c_{ija}{\bf x}^a$
with coefficients $c_{ija}\in K[U, V, G]$. 

Let $C$ denote the ideal in $K[U,V,G]$ generated by the $t$ polynomials $c_{ija}$, $i=1,\ldots,m$, $j=1,\ldots, n$, $0\le |a|\le k$. Denote the polynomials $c_{ija}$ by $c_1,\ldots, c_t$.

4. Let $D\subset K[U,V,G,u]$ be the ideal generated by the polynomials
$$u_i-c_i(U,V,G), \hskip 10pt i=1,\ldots, t.$$

Eliminate the variables $U_{ij}$, $V_{ij}$, $G_{ij}$, $H_{i,a}$ from $D$ by computing a standard basis w.r.t. an elimination ordering (see \cite{GP07}). Get finitely many polynomials $F_1,\ldots, F_r\in K[u]$.

5. {\bf Return:} $F_1,\ldots, F_r$.
\begin{Remark}\rm
	The algorithm terminates since each of the five steps terminates obviously. It is correct since the $\mathcal{G}^{(k)}_{lr}$ is the open subset $\det\ne 0$ in $K^N$ with coordinates $U, V, G$ and the orbit map $o: G^{(k)}\to  M_{m,n}^{(k)}$ is given on the ring level by 
	$$u_i=c_i(U,V, G), \hskip 7pt i=1,\ldots, t.$$
	It is well known (e.g. \cite{GP07}) that the closure of image is defined by eliminating $U, V, G$ from the ideal $\langle u_i-c_i \rangle$.
\end{Remark}

	The computation of the equation for the orbit of the other groups $\mathcal{R}$, $\mathcal{G}_l$, $\mathcal{G}_r$ is a special case of the computation for  $\mathcal{G}_{lr}$, by omitting $U_{ij}$ and $V_{ij}$ for $\mathcal{R}$, $V_{ij}$ for $\mathcal{G}_l$ and $U_{ij}$ for $\mathcal{G}_r$.

\vskip 7pt
	We present now an algorithm to compute the stabilizer 
	$$G_A^{(k)}=\{g\in G^{(k)}\mid gA=A\}$$
	of the action of $G^{(k)}$ on $M_{m,n}^{(k)}$. We use the notations from Algorithm 5.
	
\vskip 7pt
	
	{\bf Algorithm 6:} StabEq ($G=\mathcal{G}_{lr}$.)
	
	{\bf Input:} integer $k$, matrix $A=[a_{ij}]\in M_{m,n}^{(k)}$
	
	{\bf Output:}  polynomials $D_1,\ldots, D_t\in K[U,V,G]$ such that the variety $V(D_1,\ldots, D_t)$ coincides with $G_A^{(k)}\subset G^{(k)}$.
		
1. and 2. are the same as in algorithm Orbit equations, we get  
 $$B=[b_{ij}]\in Mat(m,n, K[{\bf x}, U, V, G]).$$	
 
3. Write 	
	$$b_{ij}-a_{ij}=\sum\limits_{|a|=0}^kd_{ija}{\bf x}^a, \hskip 10pt d_{ija}\in K[U,V,G]$$
	and let $D$ be the ideal in $K[U,V,G]$ generated by $d_{ija}$, $i=1,\ldots, m$, $j=1,\ldots,n$, $|a|=0,\ldots, k$. Denote the polynomials $d_{ija}$ by $D_1,\ldots, D_t$.
	
	4. Return $D_1\ldots, D_t$.
\begin{Remark}\rm
	The algorithm terminates and since the coefficients $d_{ija}$ of $b_{ij}-a_{ij}$ are the polynomials defining the set $\{gA-A\mid g\in G^{(k)}\}$ it is also correct.
\end{Remark}\rm	
	It is now easy to compute the codimension of $T_A(GA)$ in $M_{m,n}$ if $\tilde T_A(GA)$ has finite codimension. We have two algorithms.
\vskip 10pt

	{\bf Algorithm 7:} codimTang$_G$1
	
	{\bf Input:} $A=[a_{ij}]\in M_{m,n}$, assume $\dim_K(M_{m,n}/\tilde T_A(GA))<\infty$, specification of $G$.
	
	{\bf Output:} $c=\dim_K M_{m,n}/T_A(GA)$
	
	(1) Compute the pre-determinacy bound $p$ for $A$ with Algorithm 3.
	
	(2) Apply Algorithm 5 to compute the orbit equations $F_1,\ldots, F_r$.
	
	(3) Compute a standard basis of the ideal $I=\langle F_1,\ldots, F_r\rangle$ w.r.t. any monomial ordering.
	
	(4) Compute $\dim I$.
	
	(5) {\bf Return:} $c=\dim M_{m,n}^{(p)} -\dim I$ (note: $\dim M_{m,n}^{(k)}=mn\binom{s+k}{k})$
	
	\vskip 10pt
	
	{\bf Algorithm 8:} codimTang$_G$2
	
	{\bf Input:} $A=[a_{ij}]\in M_{m,n}$, assume $\dim_K M_{m,n}/\tilde T_A(GA)<\infty$, specification of $G$.
	
	{\bf Output:} $c=\dim_K M_{m,n}/T_A(GA).$
	
	(1) Compute the pre-determinacy bound $p$ for $A$ with Algorithm 3.
		
	(2) Compute stabilizer equations $D_1,\ldots, D_t$ with Algorithm 6.
	
	(3) Compute a standard basis of the ideal $D=\langle D_1,\ldots, D_t\rangle$ w.r.t. any monomial ordering.
	
	(4) Compute $\dim D$.
	
	(5) {\bf Return:}  $c=\dim M_{m,n}^{(p)} - \dim G^{(p)}+ \dim D$
	(e.g. $\dim \mathcal{G}_{lr}^{(k)}=(m^2+n^2+s)\binom{s+k}{k}-s$).
	
	\section{A problem in singularity theory}
	The algorithms of this paper are of interest for the classification of singularities in arbitrary characteristic.
	The classification of isolated hypersurface singularities $f\in\C\{x_1,\ldots, x_n\}$ has a long tradition with contribution by many authors, most notably by V.I. Arnold and his school \cite{AGV85}. For formal power series $f\in R=K[[{\bf x}]]$, where $K$ is a field of arbitrary characteristic, the classification started with \cite{GK90} and was continued in \cite{BGM11}, \cite{BGM12}, \cite{GN14}, \cite{Ng15}. The two most important equivalence relations are {\em right equivalence} and {\em contact equivalence}. Here $f,g\in R$ are right equivalent ($f\mathop\sim\limits^r g$) if $f=\phi(g)$ for some $\phi\in Aut(R)$ and they are contact equivalent ($f\mathop\sim\limits^c g$) if $f=u\cdot\phi(g)$ for some $\phi\in Aut(R)$ and a unit $u\in R^*$. An indispensable assumption for the classification is that the power series are finitely determined (for the considered equivalence relation) and that an explicit and computable determinacy bound is known.
	
	If the field $K$ has characteristic $0$ (or in the case of convergent power series over $\C$ and $\R$) such bounds are known for a long time (see e.g. \cite{GLS07} for references)
and finite determinacy is equivalent to $f$ having an isolated singularity. Moreover, in this case the orbit map is separable and the action of the right group $\mathcal{R}=Aut(R)$ and the contact group $\mathcal{K}=R^*\ltimes \mathcal{R}$ on $R$ can be faithfully described on the tangent level.
\vskip 7pt

In positive characteristic however one has to work directly with the group actions and therefore the methods of proof must be different. This is in principal well-known, but it has been discovered only recently by the authors that the tangent space to the orbit of the action of $\mathcal{K}$ on $K[[{\bf x}]]$ may be different from the tangent image, a fact that had been overlooked by several authors before. The purpose of this section is to prove this fact by giving the details of the computation of an explicit example, as announced in \cite{GP16}.

Note that the tangent space to the orbit coincides with the tangent image iff the orbit map $\mathcal{K}^{(k)}\to \mathcal{K}^{(k)} f$, $(u,\phi)\mapsto u\phi(f)$, is separable for sufficiently big $k$ (see \cite{GP16}) for a discussion and a precise statement), which is always true in characteristic 0. 
It came as a surprise to us that this separability may fail, since it was shown in \cite{BGM12} that the map of the full action, $\mathcal{K}^{(k)}\times K[[{\bf x}]]^{(k)}\to K[[{\bf x}]]^{(k)}$, is always separable. Our experiments with {\sc Singular} (\cite {DGPS16}), using the algorithms of this paper, show that also in positive characteristic separability of the orbit map holds in most cases and in fact, for the right group $\mathcal{R}$ we do not have a  non-separable example with isolated singularity so far. 
\vskip 7pt

Right resp. contact equivalence for power series is a special case for matrices of size $m=n=1$ and the groups $\mathcal{R}$ resp. $\mathcal{K} = \mathcal{G}_l$. Our algorithms go however much further by treating matrices of arbitrary size and more equivalence relations. They provide general tools, not only to compute determinacy bounds, but also to decide in concrete cases in positive characteristic whether the tangent image coincides with the tangent space, i.e. whether separability of the orbit map holds or not. 

The classificaion of general matrices with a small number of moduli is still an unsolved problem and we believe that the presented algorithms are useful in this context.

\vskip 7pt

\begin{Example}\label{counterexmple}\rm
We give an example for $G=\mathcal{K}$ acting on $ K[[x,y]]$, where the tangent image is strictly contained in the tangent space.

Let $\characteristic(K)=2$, $f = x^2 + y^3 \in K[[x,y]]$. We compute:

\begin{itemize}
	\item the tangent image $\widetilde T_{f}(\mathcal{K} f) = \langle f \rangle + \mathfrak{m} \cdot {\rm{j}}(f)$ is $\langle x^2,xy^2,y^3 \rangle$,\\
		its codimension in $M_{1,1}=K[[x,y]]$ is $c=5$.

	\item $f$ is $\mathcal{K}$ 4-determined, its $\mathcal{K}$--pre-determinacy is $p=2$.

	\item  $K[[x,y]]^{(p)} = K[[x,y]]/ \mathfrak{m}^3$ has dimension $t=6$. 

	\item The group $\mathcal{K}^{(p)}$ has dimension 16 and the stabilizer of $jet_p(f)$ has dimension 14.
	
	\item	The dimension of the orbit is 2, its codimension (also the codimension of the tangent space) is 4. As the codimension of the tangent image is 5, the orbit map $\mathcal{K}^{(p)} \to \mathcal{K}^{(p)}f$
	is not separable.
	.
\end{itemize}
\end{Example}

In order to prove the statements, we present the {\sc Singular} input for direct use, together with some comments.

\begin {verbatim} 
ring r = 2,(x,y),ds;                   //local ordering, char(K)=2 
poly f = x2 + y3;

//compute tangent image (Algo 1) and its codimension (Algo 2)
ideal j = jacob(f);
ideal m = maxideal(1);
ideal T = std(m*j+ideal(f)); T;        //T=tangent image =<x2,xy2,y3>
int c = vdim(T); c;                    //c=codim of tangent image =5

//Compute pre-determinacy (Algo 3) and determinacy bound (Algo 4) 
int p, d; 
while (size(NF(maxideal(p+1),T))!=0)  
{ p = p+2; }
p;                                     //p=pre-determinacy =2 
d = 2*p-ord(f)+2; d;                   //d=determinacy bound =4

//Compute orbit equations (Algo 5, step 1. and 2.)
ideal km = kbase(maxideal(p+1));       
int t = size(km); t;                    //t=dim of p-jet of M_m,n =6
int s = t-1;                            //we will omit km[6] = 1
ring R = (2,u(0..s),a(1..s),b(1..s)),(x,y),ds; 
                                        //ring for creating contact group
poly f = imap(r,f);
ideal km = imap(r,km);
poly u,h1,h2; 
int ii;                                                      
for (ii=1; ii<=s; ii++)
{ u  = u+km[ii]*u(ii);
  h1 = h1+km[ii]*a(ii);
  h2 = h2+km[ii]*b(ii);
}
u = u + u(0) + 1;                       //u=1+ u(0)+u(1)x+..., unit
h1 = h1 + x;
h2 = h2 + y;                            //(h1,h2)= id+..., coord. change
map phi = (r,h1,h2); phi;
poly F = jet(phi(f),p);
F = jet(u*F,p); F;                      //orbit equations

//Compute stabilizer (Algo 6)   
F = jet(F-f,p);                         
matrix C = coef(F,xy);
int n = ncols(C);       
ideal D = C[2,1..n]; //coefficients of F-f

ring S = 2,(u(0..s),a(1..s),b(1..s)),ds; //local ring of contact group
ideal D = imap(R,D);  D;                 //ideal of stabilizer

//Compute codimension of tangent space (Algo 8)
D = std(D);     
int c1 = dim (D); c1;                  //c1=dimension of stablizer =14
int c2 = nvars(S); c2;                 //c2=dimension of group =16
int c3 = c2-c1;  c3;                   //c2=dimension of orbit =2 
int c4 = t-c3; c4;                     //c4=codimension of orbit =4
c-c4;                                  //= 1, orbit map not separable

\end{verbatim}



\providecommand{\bysame}{\leavevmode\hbox to3em{\hrulefill}\thinspace}
\providecommand{\MR}{\relax\ifhmode\unskip\space\fi MR }
\providecommand{\MRhref}[2]{%
	\href{http://www.ams.org/mathscinet-getitem?mr=#1}{#2}
}
\providecommand{\href}[2]{#2}

Fachbereich Mathematik, Universit\"at Kaiserslautern, Erwin-Schr\"odinger Str.,
67663 Kaiserslautern, Germany

E-mail address: greuel@mathematik.uni-kl.de

\vskip 7pt 

Department of Mathematics, Quy Nhon University, 170 An Duong Vuong
Street, Quy Nhon, Vietnam

Email address: phamthuyhuong@qnu.edu.vn

\end{document}